\title{Outliers and patterns of outliers in contingency tables with Algebraic Statistics}
\author{Fabio Rapallo \\ Department DISTA, Universit\`a del Piemonte Orientale \\
Viale Teresa Michel, 11 \\
15121 Alessandria, Italy \\ \tt{fabio.rapallo@mfn.unipmn.it} }
\date{}
\begin{document}

\newtheorem{theorem}{Theorem}
\newtheorem{proposition}{Proposition}
\newtheorem{corollary}{Corollary}

\newcounter{example}
\newenvironment{example}
 {\refstepcounter{example}\paragraph {\it{Example \arabic{example}}}}
{\par
 \bigskip}

\newcounter{remark}
\newenvironment{remark}
 {\refstepcounter{remark}\paragraph {\it{Remark \arabic{remark}}}}
{\par
 \bigskip}

\newcounter{definition}
\newenvironment{definition}
 {\refstepcounter{definition}\paragraph {\bf{Definition \arabic{definition}}}}
{\par
 \bigskip}

\maketitle                 

\begin{abstract}
In this paper we provide a definition of pattern of outliers in
contingency tables within a model-based framework. In particular,
we make use of log-linear models and exact goodness-of-fit tests
to specify the notions of outlier and pattern of outliers. The
language and some techniques from Algebraic Statistics are
essential tools to make the definition clear and easily
applicable. We also analyze several numerical examples to show how
to use our definitions.

\bigskip

{\it Key words: Algebraic Statistics; goodness-of-fit tests;
log-linear models; toric models.}
\end{abstract}

\section{Introduction} \label{intro}

The detection of outliers is one of the most important problems in
Statistics and it is a current research topic in the field of
contingency tables and categorical data. Some recent developments
in this direction can be found in \cite{kuhnt:04}, where the
author describes a procedure to identify outliers based on the
tails of the Poisson distribution and discusses the use of
different estimators to compute the expected counts under the null
hypothesis. A model-based approach to the detection of unexpected
cell counts is the Configural Frequency Analysis (CFA), where the
outlying counts are called ``types'' or ``antitypes'' if they are
significantly higher or smaller with respect to the expected
counts under a suitable model. The use of log-linear models for
CFA was presented in \cite{kieser|victor:99} and reanalyzed in
\cite{voneye|mair:08}. A complete account on theory and
applications of CFA can be found in \cite{voneye:02} and
\cite{voneye|mair|mun:10}.

The difficulties behind the definition of outlying cell in
contingency tables is proved by the number of different
approaches. About these difficulties, and more generally on the
old question: ``What a contingency table does say?'', an
interesting discussion is presented in
\cite{kateri|balakrishnan:08}. Some basic notions and appropriate
references for existing methods will be given later.

The notion of outlier for univariate and multivariate continuous
distributions is a well known fact. For example, in the univariate
case the outliers are usually detected through the boxplot or the
comparison of the standardized values with respect to the
quantiles of the normal distribution. It should be noted that
there is no unique mathematical definition of outlier, as pointed
out for instance in \cite{barnett|lewis:94}. Notice also that the
notion of outlier should be considered as outlier with respect to
a specified probability model. For instance, in the continuous
univariate case, it is usual to consider outliers with respect to
the Gaussian distribution, leading to the well known three-sigma
criterion.

The notion of outlier for contingency tables has a less clear
meaning. In fact, the random variables we consider are categorical
and the cells of the table are counts. When we consider
contingency tables, we do not define the outliers among the
subjects, but among the counts. As the counts can be modelled in a
simple Poisson sampling scheme, one would use the quantiles of the
Poisson distribution in order to detect the outliers in a
contingency table. Using a different approach, the detection of
outliers can also be deduced from the analysis of the adjusted
residuals. This approach has been presented in
\cite{fuchs|kenett:80} to test the presence of outliers in a
table, while the algorithm in \cite{simonoff:88} uses the adjusted
residuals and their contribution to the chi-squared Pearson's test
statistics to detect the position of the outlying cells.

In the past decade, Algebraic Statistics has been a very growing
research area, with major applications to the analysis of
contingency tables. Algebraic Statistics now provides an easy
description of complex log-linear models for multi-way tables and
it represents the natural environment to define statistical models
for contingency tables with structural zeros, through the notion
of toric models. Moreover, non-asymptotic inference is now more
actual via the use of Markov bases and the Diaconis-Sturmfels
algorithm. As general references on the use of Algebraic
Statistics for contingency tables, see
\cite{pistone|riccomagno|wynn:01}, \cite{pachter|sturmfels:05} and
\cite{drton|sturmfels|sullivant:09}. Some specific statistical
models to study complex structures in contingency tables can be
found in \cite{rapallo:05}, \cite{carlini|rapallo:10} and
\cite{carlini|rapallo:11}, with relevant applications in the
detection of special behaviours of some subsets of cells
(quasi-independence models, quasi-symmetry models, weakened
independence models).

In this paper, we use the dictionary, the reasoning and some
techniques from Algebraic Statistics in order to study the notion
of outliers in contingency tables. The outliers are defined in
terms of goodness-of-fit tests for tables with fixed cell counts.
Then, we investigate the main properties of the outliers and we
show how Algebraic Statistics is a useful tool both to make exact
inference for goodness-of-fit tests, and to easily describe
complex structures of outliers. We notice that the procedure
defined here is mainly useful as a confirmatory analysis after a
detection step based, for example, on the analysis of the
residuals. We will use this approach in the numerical examples,
detecting the candidate outliers through the residuals and then
testing them with the appropriate goodness-of-fit test. More
details on that issue will be discussed later in the paper.

The material is organized as follows. In Section \ref{recall-sect}
we recall some definitions and  basic results about toric models,
while in Section \ref{outliers-sect} we show how to study a single
outlying cell in the framework of toric models and we describe
explicitly the Monte Carlo test using Markov bases. In Section
\ref{sets-patterns-sect} we present the notions of sets and
patterns of outliers, and we analyze two real-data examples.
Finally, Section \ref{final-sect} contains some concluding remarks
and pointers to future works. In order to help readers with little
experience in polynomial algebra, we have decided to focus the
presentation on the statistical ideas. Thus, in the main body of
the paper we have avoided formal definitions whenever possible,
and we have grouped in the Appendix all the needed technical facts
from Algebraic Statistics.

\section{Some recalls about log-linear and toric models}
\label{recall-sect}

A probability distribution on a finite sample space ${\mathcal X}$
with $K$ elements is a normalized vector of $K$ non-negative real
numbers. Thus, the most general probability model is the simplex
\begin{equation*}
\Delta = \left\{(p_{1}, \ldots, p_{K}) \ : \ p_{k} \geq 0 \ , \
\sum_{k=1}^K p_{k} = 1   \right\} \, .
\end{equation*}
A statistical model ${\mathcal M}$ is therefore a subset of
$\Delta$.

A classical example of finite sample space is the case of a multi-way
contingency table where the cells are the joint counts of two or more
random variables with a finite number of levels each. In the case
of two-way contingency tables, where the sample space is usually
written as a cartesian product of the form ${\mathcal X}=\{1,
\ldots , I\} \times \{1 , \ldots, J \}$. We will consider this
case extensively in the next sections.

A wide class of statistical models for contingency tables are the
log-linear models \citep{agresti:02}. Under the classical Poisson
sampling scheme, the cell counts are independent and identically
distributed Poisson random variables with means $Np_1, \ldots,
Np_K$, where $N$ is the sample size, and the statistical model
specifies constraints on the parameters $p_1, \ldots, p_K$. A
model is log-linear if the log-probabilities lie in an affine
subspace of the vector space ${\mathbb R}^K$. Given $d$ real
parameters $\alpha_1, \ldots, \alpha_d$, a log-linear model is
described, apart from normalization, through the equations:
\begin{equation} \label{loglin}
\log (p_k) = \sum_{r=1}^d A_{k,r}\alpha_r
\end{equation}
for $k=1, \ldots, K$, where $A$ is the design matrix, see Ch.6 in \cite{pistone|riccomagno|wynn:01}. Exponentiating Eq.
\eqref{loglin}, we obtain the expression of the corresponding
toric model
\begin{equation} \label{toric}
p_k = \prod_{r=1}^d \zeta_r^{A_{k,r}}
\end{equation}
for $k=1, \ldots , K$, where $\zeta_{r} = \exp(\alpha_r)$, $r=1,
\ldots, d$, are the new non-negative parameters. It follows
immediately that the design matrix $A$ is also the matrix
representation of the minimal sufficient statistic of the model.

Notice that the model representations in Eq. \eqref{loglin} and
\eqref{toric} are equivalent on the open simplex, but the toric
representation allows us to consider also the boundary and,
therefore, the tables with structural zeros. This issue will be
essential in our definition of outliers. The matrix representation
of the toric models as in Eq. \eqref{toric} is widely discussed
in, e.g., \cite{rapallo:07} and
\cite{drton|sturmfels|sullivant:09}.

To obtain the implicit equations of the model, it is enough to
eliminate the $\zeta$ parameters from the system in Eq.
\eqref{toric}. In this paper, we will make use of the following
ingredients from Algebraic Statistics:
\begin{itemize}
\item[(i)] the toric ideal ${\mathcal I}_A$ of a statistical toric
model with design matrix $A$;

\item[(ii)] the variety ${\mathcal V}_A$ of the model;

\item[(iii)] the Markov basis ${\mathcal M}_A$ of the model.
\end{itemize}
To keep the exposition simple, we have collected the formal
definitions of these objects and some basic results on them in the
Appendix. We mention here only a few basic consequences of that
results that will be used in our presentation.

The toric ideal ${\mathcal I}_A$ of a toric model is by definition
the set of polynomials vanishing at each point of the model. Each
toric ideal is generated by a finite set of binomials, and thus we
can write
\begin{equation} \label{def-id}
{\mathcal I}_A = \mathrm{Ideal}(g_1, \ldots, g_\ell) \, ,
\end{equation}
meaning that each polynomial $g \in {\mathcal I}_A$ can be written
in the form $g = r_1g_1 + \ldots + r_\ell g_\ell$ for suitable
polynomials $r_1, \ldots , r_\ell$.

The binomials $g_1, \ldots, g_\ell$ can be actually computed with
symbolic software without any difficulties, at least for small-
and medium-sized tables, and we assume such binomials as given
together with the design matrix $A$. We write a binomial in
vectorial form $g=p^a-p^b$ meaning $g=\prod_k p_k^{a_k} - \prod_k
p_k^{b_k}$. Notice that for strictly positive probabilities the
equation $p^a-p^b=0$ is equivalent to $\log ({p^a} / {p^b})= 0$.
Therefore, the vanishing of a binomial correspond to the vanishing
of a log odds ratio and vice-versa. The vanishing log odds ratios
associated to a design matrix can be computed without polynomial
algebra, as they are the output of simple matrix computations.
Nevertheless, we emphasize that the usefulness of the binomials in
Definition \ref{def-id} is twofold:
\begin{itemize}
\item on one hand, the binomials $g_1, \ldots, g_\ell$ determine
the statistical model in the closed simplex $\Delta$. In fact, the
variety ${\mathcal V}_A$ associated to ${\mathcal I}_A$ is the set
of points
\begin{equation*}
{\mathcal V}_A = \left\{ p=(p_1, \ldots, p_K) \ : \ g_1(p) = 0 ,
\ldots, g_\ell(p) = 0 \right\} \subset {\mathbb R}^K
\end{equation*}
and, therefore, we obtain the statistical model simply by
normalization ${\mathcal V}_A \cap \Delta$;

\item on the other hand, the $\ell$ binomials naturally define
$\ell$ integer tables, called log-vectors, obtained by taking the
exponents of the $\ell$ binomials with the map
\begin{equation*}
g=p^a-p^b \longrightarrow m=a-b \, .
\end{equation*}
The tables $m_1, \ldots, m_\ell$ form a Markov basis ${\mathcal
M}_A$ for the model, which we will use to perform non-asymptotic
goodness-of-fit tests. See the Appendix for further details on
Markov bases.
\end{itemize}

To conclude, the binomials can be used both to study the geometry
of the statistical model and for the definition of a Markov basis
for the non-asymptotic goodness-of-fit test.

As an example in the two-way setting, the independence model for
$3 \times 3$ tables is represented by the matrix
\begin{equation*}
A_{\rm ind} = \left(\begin{matrix}1 & 1 & 0 & 1 & 0 \\
             1 & 1 & 0 & 0 & 1 \\
             1 & 1  & 0 & 0 & 0 \\
              1 & 0  & 1 & 1 & 0 \\
               1 & 0  & 1 & 0 & 1 \\
                1 & 0  & 1 & 0 & 0 \\
                1 & 0   & 0 & 1 & 0 \\
                 1 & 0  & 0 & 0 & 1 \\
                  1 &  0  & 0 & 0 & 0  \end{matrix}   \right) \, ,
\end{equation*}
while the quasi-independence model, which encodes independence of
the two random variables except for the diagonal cells is
represented by
\begin{equation*}
A_{\rm q-ind} = \left(\begin{matrix} 1 & 1& 0 & 1 & 0 & 1 & 0 & 0 \\
             1 & 1 & 0 & 0 & 1 & 0& 0 & 0 \\
             1 & 1  & 0 & 0 & 0 & 0& 0 & 0  \\
              1 & 0  & 1 & 1 & 0 & 0& 0 & 0  \\
               1 & 0  & 1 & 0 & 1 & 0& 1 & 0 \\
                1 & 0  & 1 & 0 & 0 & 1 & 0 & 0  \\
                1 &  0  & 0 & 1 & 0 & 0 & 0 & 0 \\
                 1 &  0  & 0 & 0 & 1 & 0 & 0& 0 \\
                  1 & 0  & 0 & 0 & 0 & 0& 0 & 1 \end{matrix} \right) \, .
\end{equation*}
The last three columns of $A_{\rm q-ind}$ force the diagonal cells
to be fitted exactly. For further details on the
quasi-independence models, see \cite{bishop|fienberg|holland:75}.
The equations of the independence model with design matrix $A_{\rm
ind}$ is the set of all $2 \times 2$ minors of the table of
probabilities, i.e.,
\begin{equation} \label{minors}
\begin{split}
{\mathcal I}_{A_{\rm ind}}= \mathrm{Ideal}
(p_{1,1}p_{2,2}-p_{1,2}p_{2,1}, \ p_{1,1}p_{2,3}-p_{1,3}p_{2,1},
\ p_{1,1}p_{3,2}-p_{1,2}p_{3,1}, \ \\
p_{1,1}p_{3,3}-p_{1,3}p_{3,1}, \ p_{1,2}p_{2,3}-p_{1,3}p_{2,2}, \
p_{1,2}p_{3,3}-p_{3,2}p_{2,3}, \  \\
p_{2,1}p_{3,2}-p_{3,1}p_{2,2}, \ p_{2,1}p_{3,3}-p_{3,1}p_{2,3}, \
p_{2,2}p_{3,3}-p_{3,2}p_{2,3} ) \, ,
\end{split}
\end{equation}
while for the quasi-independence model from the matrix $A_{\rm
q-ind}$ we have only one binomial:
\begin{equation*}
{\mathcal I}_{A_{\rm q-ind}}= \mathrm{Ideal}
(p_{1,2}p_{2,3}p_{3,1}-p_{1,3}p_{3,2}p_{2,1} ) \, .
\end{equation*}

\begin{remark}
We point out that the independence model can be described in terms
of vector spaces by $4$ linearly independent log-vectors
\citep{agresti:02}, and typically one can use the log-vectors of
the $4$ adjacent minors. but to have a Markov basis we need all
the $9$ binomials in Eq. \eqref{minors}.
\end{remark}

Notice that, from the point of view of the statistical models, a
fixed cell count has the same behaviour as a structural zero. See
\cite{rapallo:06} for a discussion on this issue. This fact
suggests that outliers can be modelled in the framework of
statistical models with structural zeros, as we will make precise
in the following section. The use of structural zeros to model
contingency tables with complex structure is presented in
\cite{consonni|pistone:07} under the point of view of Bayesian
inference.

\begin{remark}
In the special case of independence model for two-way tables, the
use of $2 \times 2$ minors as in Eq. \eqref{minors} to detect
outliers was implemented in \cite{kotze|hawkins:84}. We also
mention that the connections between the implicit equations of the
model and the adjusted residuals are known at least in the simple
case of the independence model for two-way table, see for instance
\cite{tsumoto|hirano:07}.
\end{remark}

\section{Outliers} \label{outliers-sect}

\begin{example} \label{ex-synt}
Let us consider the following synthetic contingency table:
\begin{equation} \label{synth-ex}
f = \left( \begin{matrix}7 & 2&  2&  2 \\
                  2 & 2 & 2&  2 \\
                  2 & 2&  2 & 2 \\
                  3 & 2 & 2 & 2 \end{matrix} \right) \, .
\end{equation}
Under the independence model, it seems that the cell $(1,1)$ could
be an outlier.

With the approach presented in \cite{fuchs|kenett:80}, the
observed contingency table $f$ is the realization of a multinomial
distribution and the authors analyze the adjusted residuals under
the independence model
\begin{equation*}
Z_{i,j} = \frac {f_{i,j} - f_{i,+}f_{+,j}/N} {\sqrt{
f_{i,+}(N-f_{i,+})f_{+,j}(N-f_{+,j})/N^3}}
\end{equation*}
for $i=1, \ldots, I$ and $j=1, \ldots, J$, where $N$ is the sample
size and $f_{i,+}$ and $f_{+,j}$ are the row and column sums,
respectively. To check the presence of outlying cells, the authors
use the test statistics $Z=\max_{i,j} |Z_{i,j}|$ and they find
suitable approximations for the two-sided $\alpha$-level critical
value, using the standard Normal distribution. The use of the
adjusted residuals to detect outliers was first described in
\cite{haberman:73}. However, we warn that the test in
\cite{fuchs|kenett:80} is a global test and it is not useful to
detect the position of the outliers in the table.

On the other hand, the approach described in \cite{kuhnt:04} is
based on the computation of the ML (or $L_1$) estimate of the
mean of the Poisson distributions for the cell counts, and then a
cell is declared as outlier if the actual count falls in the tails
of the appropriate Poisson distribution.

Let us analyze the observed table $f$ above under the two
approaches described here. Using the adjusted residuals as in
\cite{fuchs|kenett:80}, the value of the test statistics is
$z=1.5670$ (the highest adjusted residuals), while the critical
value at the $\alpha=5\%$ level is $2.9478$, showing that there is
no evidence of outlying cells. Under the Poisson approach as in
\cite{kuhnt:04}, we find that the observed value in the cell
$(1,1)$ is not considered an outlier at the $5\%$-level, both
using the standard ML estimate $\hat f_{1,1}=4.7895$ (outlier
region $[9, +\infty)$), and using the more robust $L_1$ estimate
$\tilde f_{1,1}=3.5$ (outlier region $[8,+\infty)$).
\end{example}

As mentioned above, we adopt here a different
point of view to set up the definition and the detection of the
outliers in a contingency table. We define them using a
model-based approach with appropriate goodness-of-fit tests for
the comparison of two nested models. The starting point is similar
to the definition of types and antitypes in CFA, see
\cite{kieser|victor:99}, but after the first definitions we will
use Algebraic Statistics to understand and generalize the notion
of outlier.

Given a contingency table with $K$ cells, let us consider a
statistical toric model for the table. The model has the
expression:
\begin{equation} \label{base}
p_k = \prod_{r=1}^d \zeta_r^{A_{k,r}}
\end{equation}
for all $k = 1, \ldots, K$. This model with matrix representation
$A$ will be named as the base model. Moreover, let $\alpha \in
(0,1)$.

\begin{definition} \label{one-out}
The cell $h$, $h \in \{1, \ldots, K\}$ is an $\alpha$-level outlier
with respect to the base model if the model
\begin{equation} \label{with-out}
p_k = \left\{ \begin{array}{lll} \prod_r \zeta_r^{A_{k,r}} & \ \ &
\mbox{
for } \ k \ne h \\ \\
\prod_r \zeta_r^{A_{h,r}} \zeta^{(s)}_{h} & \ \ & \mbox{ for } \ k
= h \end{array} \right.
\end{equation}
is significantly better than the base model at level $\alpha$,
where $\zeta^{(s)}_{h}$ is a new non-negative parameter.
\end{definition}

This means that we compare two toric models:
\begin{itemize}
\item the base model in Eq. \eqref{base} with matrix
representation $A$;

\item the model in Eq. \eqref{with-out}, whose design matrix is
\begin{equation*}
\tilde A = [ A \ | \ I_h ]
\end{equation*}
where $I_h$ is the indicator vector of the cell $h$: $I_h$ is a
vector of length $K$ with all components equal to $0$ but the
$h$-th component equal to $1$.
\end{itemize}

Notice that we do not test the goodness-of-fit of the model in Eq.
\eqref{with-out}, but we only compare it with the base model.

To avoid trivialities in Definition \ref{one-out}, we suppose that
the cell $h$ is not a component of the sufficient statistic of the
base model, i.e., we suppose that the matrices $A$ and $\tilde A$
satisfy the relation: $\mathrm{rank}(\tilde A) =
\mathrm{rank}(A)+1$. In fact, if $\mathrm{rank}(\tilde A) =
\mathrm{rank}(A)$, then the count in the cell $h$ is already a
component of the sufficient statistic of the base model and the
goodness-of-fit test becomes useless.

From the point of view of toric models, the new parameter
$\zeta^{(s)}_{h}$ imposes the exact fit of the candidate outlier
$h$. Although it is possible to find easy algebraic relations
between the ideal ${\mathcal I}_A$ of the base model and the ideal
${\mathcal I}_{\tilde A}$, we focus here on the geometric analysis
of the statistical models. In terms of varieties, the variety
${\mathcal V}_{A}$ is a subset of ${\mathcal V}_{\tilde A}$. This
follows from the proposition below. We will use it also in the
next section, thus we state the result in a general setting.

\begin{theorem} \label{propincl}
Let $A_1$ and $A_2$ be two integer non-negative matrices with $K$
rows, and let $\mathrm{Im}(A_1)$ and $\mathrm{Im}(A_2)$ be their
images, as vector spaces in ${\mathbb R}^K$. If $\mathrm{Im}(A_1)
\subset \mathrm{Im}(A_2)$, then ${\mathcal V}_{A_1} \subset
{\mathcal V}_{A_2}$.
\end{theorem}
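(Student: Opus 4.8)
The plan is to translate the hypothesis $\mathrm{Im}(A_1) \subset \mathrm{Im}(A_2)$ into the \emph{reversed} inclusion of toric ideals $\mathcal{I}_{A_2} \subset \mathcal{I}_{A_1}$, and then to use the fact that passing to the vanishing locus reverses inclusions once more, recovering $\mathcal{V}_{A_1} \subset \mathcal{V}_{A_2}$.

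First I would record the binomial description of a toric ideal in terms of the kernel of the transposed design matrix. Substituting the parametrization $p_k = \prod_r \zeta_r^{A_{k,r}}$ into a monomial gives $p^a = \zeta^{A^\top a}$, so a binomial $p^a - p^b$ vanishes identically on the model precisely when $A^\top a = A^\top b$, that is, when the log-vector $m = a - b$ lies in $\ker(A^\top)$. This is exactly the correspondence $g = p^a - p^b \longrightarrow m = a - b$ already used in the text, and it shows that $\mathcal{I}_A$ is generated by the binomials indexed by the integer vectors of $\ker(A^\top) = \mathrm{Im}(A)^\perp$.

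Next I would pass to orthogonal complements. Since $\mathrm{Im}(A_1) \subset \mathrm{Im}(A_2)$ as subspaces of $\mathbb{R}^K$, taking orthogonal complements reverses the inclusion, so that $\mathrm{Im}(A_2)^\perp \subset \mathrm{Im}(A_1)^\perp$, i.e. $\ker(A_2^\top) \subset \ker(A_1^\top)$. Intersecting with the lattice $\mathbb{Z}^K$ preserves this containment, so every log-vector of the $A_2$-model is also a log-vector of the $A_1$-model. By the characterization of the previous step, each generating binomial of $\mathcal{I}_{A_2}$ already belongs to $\mathcal{I}_{A_1}$, whence $\mathcal{I}_{A_2} \subset \mathcal{I}_{A_1}$. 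Finally, a point $p$ lies in $\mathcal{V}_{A_1}$ exactly when every polynomial of $\mathcal{I}_{A_1}$ vanishes at $p$; since $\mathcal{I}_{A_2} \subset \mathcal{I}_{A_1}$, every polynomial of $\mathcal{I}_{A_2}$ vanishes at $p$ as well, so $p \in \mathcal{V}_{A_2}$, giving $\mathcal{V}_{A_1} \subset \mathcal{V}_{A_2}$.

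The one step that requires care is the passage from integer-kernel containment to the ideal containment $\mathcal{I}_{A_2} \subset \mathcal{I}_{A_1}$: I must invoke that a toric ideal is determined by \emph{all} integer points of $\ker(A^\top)$, and not merely by some chosen Markov basis, so that the containment holds for the full ideals rather than only for the listed generators. This is precisely the standard binomial description of toric ideals collected in the Appendix, and with it in hand the chain of inclusions above is immediate.
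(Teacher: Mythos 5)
Your proposal is correct and follows essentially the same route as the paper: both arguments reduce the claim to the reversed ideal inclusion $\mathcal{I}_{A_2} \subset \mathcal{I}_{A_1}$, obtain it from the binomial generators whose log-vectors lie in $\ker(A_2^\top) \subset \ker(A_1^\top)$, and then pass back to varieties. The only cosmetic differences are that you justify the kernel reversal explicitly via orthogonal complements and carry out the final ideal-to-variety step by hand, where the paper simply asserts the kernel containment and cites its Proposition on the ideal--variety duality.
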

\begin{proof}
By virtue of Proposition \ref{dualincl} in the Appendix, we have
to show that ${\mathcal I}_{A_2} \subset {\mathcal I}_{A_1}$. Let
$g$ be a polynomial in ${\mathcal I}_{A_2}$. Then,
\begin{equation*}
g = r_1g_1 + \ldots + r_\ell g_\ell
\end{equation*}
where $\{g_1, \ldots, g_\ell\}$ is a system of generators of
${\mathcal I}_{A_2}$ and $r_1, \ldots, r_\ell$ are polynomials.

From Theorem \ref{DS-teo} in the Appendix, $g_1, \ldots, g_\ell$
are binomials and their log-vectors (see Definition
\ref{logvector} in the Appendix) $m_1, \ldots, m_\ell$ are in
$\ker(A^t_2)$. As $\ker(A^t_2) \subset \ker(A^t_1)$, we have also
that $g \in {\mathcal I}_{A_1}$. This proves the result.
\end{proof}

The inclusion ${\mathcal V}_{A} \subset {\mathcal V}_{\tilde
A}$ follows from Theorem \ref{propincl} with $A_1=A$ and $A_2
= \tilde A$.

To actually check if a cell is an outlier, it is enough to
implement the goodness-of-fit test in Definition \ref{one-out}.
This test can be done using the log-likelihood ratio statistic
\citep[page 591]{agresti:02}. The test statistic has the
expression
\begin{equation*}
G^2 = 2 \sum_{k=1}^K f_k \log \left( \frac {\hat f_{1k}} {\hat
f_{0k}} \right) \, ,
\end{equation*}
where $\hat f_{0k}$ and $\hat f_{1k}$ are the maximum likelihood
estimates of the expected cell counts under the base model with design
matrix $A$ and the model with design matrix $\tilde A$,
respectively. The value of $G^2$ must be compared with the
appropriate quantiles of the chi-square distribution with $1$ df.

Alternatively one can make exact inference via Markov bases and
the Diaconis-Sturmfels algorithm (see Ch.1 in \cite{drton|sturmfels|sullivant:09}).

Given an observed contingency table $f \in {\mathbb N}^K$ and a
Markov basis ${\mathcal M}_A$ for the base model, one can apply
the Diaconis-Sturmfels algorithm by sampling $B$ contingency
tables from its reference set
\begin{equation*}
{\mathcal F}_A(f) = \left\{ f' \in {\mathbb N}^K \ : \ A^t f' = A^t f
\right\} \, .
\end{equation*}
The reference set is the set of all contingency tables with the
same value of the sufficient statistic $A^t f$ as the observed
table. The relevant distribution on ${\mathcal F}_A(f)$ is the
hypergeometric distribution ${\mathcal H}(f')$, and the explicit
expression of this distribution is
\begin{equation*}
{\mathcal H}(f') = \frac {1 / \prod_k 1/(f'_k)!} {\sum_{f^*\in
{\mathcal F}_A(f)} 1 / \prod_k 1/(f^*_k)! } \, .
\end{equation*}
See \cite{drton|sturmfels|sullivant:09} for details on the
derivation of this distribution. To actually sample from the
reference set with the prescribed distribution, we implement a
Metropolis-Hastings Markov chain starting from the observed table.
At each step:
\begin{enumerate}
\item let $f$ be the current table;

\item choose with uniform probability a move $m \in {\mathcal
M}_A$ and a sign $\epsilon= \pm 1$ with probability $1/2$ each;

\item define the candidate table as $f_+=f+\epsilon m$;

\item generate a random number $u$ with uniform distribution over
$[0,1]$. If $f_+ \geq 0$ and
\begin{equation*}
\min \left\{ 1 , \frac {\mathcal H(f_+)}  {\mathcal H(f)} \right\}
> u
\end{equation*}
then move the chain in $f_+$; otherwise stay at $f$.
\end{enumerate}
The use of a Markov basis as set of moves ensures the
connectedness of the Markov chain. The proportion of sampled
tables with test statistics greater than or equal to the test
statistic of the observed one is the Monte Carlo approximation of
$p$-value of the log-likelihood ratio test.

\begin{example}
Analyzing the contingency table in Example \ref{ex-synt} with a
Monte Carlo approximation based on $B=10,000$ tables we obtain an
approximated $p$-value $0.1574$, showing that there is no evidence
to conclude that the cell $(1,1)$ is an outlier. In this example,
the asymptotic $p$-value based on the chi-squared approximation is
$0.0977$, with a noteworthy difference with respect to the Monte
Carlo approach. Notice that in similar problems the asymptotic
approximation dramatically fails. To see this, consider the
observed table

\begin{equation*}
f' = \left( \begin{matrix}0 & 2&  2&  2 \\
                  2 & 2 & 2&  2 \\
                  2 & 2&  2 & 2 \\
                  3 & 2 & 2 & 2 \end{matrix} \right) \, .
\end{equation*}
This table differs from the first example in Eq. \eqref{synth-ex}
only in the first cell. Here, the cell $(1,1)$ is an antitype with
an observed count less than the expected under independence, while
in Eq. \eqref{synth-ex} the cell $(1,1)$ was a type. For this
table $f'$, the Monte Carlo $p$-value is $0.1856$, while the
corresponding asymptotic approximation is $0.0522$.
\end{example}

All the simulations presented in this paper has been performed in
{\tt R}, see \cite{rproject:10} together with the {\tt gllm}
package to make inference on generalized log-linear models
\citep{duffy:10}.

\begin{remark}
From the discussion in Example \ref{ex-synt} one sees that we have used our procedure only for the confirmatory step. Nevertheless, in the simple case of a single outlier the test can also be used to detect an outlier. Is is enough to run the test once for each cell.
\end{remark}

Finally, we remark that in many cases the computation of a Markov
basis ${\mathcal M}_A$ for the base model does not need explicit
symbolic computations. In fact, for several statistical models,
such as independence, symmetry, quasi-independence, a Markov basis
has been computed theoretically, see
\cite{drton|sturmfels|sullivant:09} and \cite{rapallo:03}. For
instance, our numerical example in this section considers the
independence model as base model and a suitable Markov basis is
formed by the $36$ basic moves of the form $\begin{pmatrix} +1 &
-1 \\ -1 & +1\end{pmatrix}$ for all $2 \times 2$ minors of the
table.

In view of the connections between Markov bases and varieties,
this example is quite simple from the point of view of Geometry.
In fact, the variety of the base model is described by the
vanishing of all $2 \times 2$ minors of the table of
probabilities. In the same way, it is easy to see that the variety
of the model with one outlier is described by the vanishing of the
$27$ $2 \times 2$ minors not involving the $(1,1)$ cell.

\section{Sets and patterns of outliers} \label{sets-patterns-sect}

Definition \ref{one-out} can be easily extended to a set of
outliers.

\begin{definition} \label{set-out}
The cells $h_1 , \ldots, h_m$ form an $\alpha$-level set of
outliers with respect to the base model if the model
\begin{equation} \label{with-set-out}
p_k = \left\{ \begin{array}{lll} \prod_r \zeta_r^{A_{k,r}} & \ \
 & \mbox{ for } \ k \ne h_1, \ldots, h_m \\
\\
 \prod_r \zeta_r^{A_{k,r}} \zeta^{(s)}_{k} & \ \ & \mbox{ for } \
k=h_1, \ldots, h_m \end{array} \right.
\end{equation}
is significantly better than the base model at level $\alpha$,
where $\zeta^{(s)}_{h_1}, \ldots, \zeta^{(s)}_{h_m}$ are $m$ new
non-negative parameters.
\end{definition}

In analogy with our previous analysis, notice that the model in
Eq. \eqref{with-set-out} has matrix representation
\begin{equation*}
\tilde A = [ A \ | \ I_{h_1} \ | \ \cdots \ | \ I_{h_m} ] \, ,
\end{equation*}
where $I_{h_1}, \ldots, I_{h_m}$ are the indicator vectors of the
cell $h_1, \ldots, h_m$ respectively.

Also in this definition, to avoid trivialities, we suppose that
the cells $h_1, \ldots, h_m$ are not components of the sufficient
statistic of the base model, i.e., we suppose that
$\mathrm{rank}(\tilde A) > \mathrm{rank}(A)$. It is clear that the
difference $\mathrm{rank}(\tilde A) - \mathrm{rank}(A)$ is just
the number of degrees of freedom of the goodness-of-fit test. The
test procedure can be performed with the same technique as for a
single outlier. The algorithm is essentially the same as in
Section \ref{outliers-sect} for a single outlier.

\begin{example}
Let us consider the independence model for $4 \times 4$ tables as
the base model, as in the previous discussion. Now, we look at the
$8$ cells on the diagonal and the anti-diagonal as the set of
outliers. The ideal of the base model is generated by the $36$ $2
\times 2$ minors of the table of probabilities, while computation
of the ideal without the $8$ variables $p_{1,1}, \ldots,
p_{4,4},p_{1,4}, \ldots, p_{4,1}$ gives an ideal generated by the
$2$ binomials:
\begin{equation*}
-p_{1,3}p_{4,2} + p_{1,2}p_{4,3}, \ -p_{2,4}p_{3,1} +
p_{2,1}p_{3,4} \ .
\end{equation*}

When the dimensions of the table increase, the toric ideals become
more complicated. For instance, the same problem as above for $5
\times 5$ tables yields a base model generated by the $100$ $2
\times 2$ minors of the table of probabilities, and the toric
ideal without the $9$ variables $p_{1,1}, \ldots, p_{5,5},p_{1,5},
\ldots, p_{5,1}$ is generated by $28$ binomials: $10$ binomials of
degree $2$ of the form $- p_{1,4 }p_{3,2} + p_{1,2}p_{3,4}$, and
$18$ binomials of degree $3$ of the form $p_{3, 5 }p_{4, 3 }p_{5,
2 } - p_{3, 2 }p_{4, 5 }p_{5, 3}$.
\end{example}

As mentioned in the Introduction, one among the key points of
Algebraic Statistics lies in the possibility to make the
description and the meaning of log-linear models easier. Thus, we
can enrich the base model in many ways.

\begin{definition} \label{pattern-out}
The cells $h_1 , \ldots, h_m$ form an $\alpha$-level pattern of outliers with
respect to the base model if the model
\begin{equation*}
p_k = \left\{ \begin{array}{lll} \prod_r \zeta_r^{A_{k,r}} & \ \ &
\mbox{ for } \ k \ne h_1, \ldots, h_m \\
\\
\prod_r \zeta_r^{A_{k,r}} \zeta^{(p)} & \ \  & \mbox{ for } \ k =
h_1, \ldots, h_m
\end{array} \right.
\end{equation*}
is significantly better than the base model, where $\zeta^{(p)}$
is a new non-negative parameter.
\end{definition}

To avoid trivialities in Definition \ref{pattern-out}, we suppose
that the indicator vector of the cells $h_1, \ldots, h_m$ is not a
component of the sufficient statistic of the base model, i.e., we
suppose that the matrices $\tilde A$ and $A$ satisfy:
$\mathrm{rank}(\tilde A) = \mathrm{rank}(A)+1$.

\begin{remark}
Notice that in Definition \ref{pattern-out} the outlying cells in
a pattern are characterized by a single parameter $\zeta^{(p)}$.
This means that we assume a common behaviour of that cells.
\end{remark}

As an immediate consequence of Theorem \ref{propincl}, we have the
following result about the connections between sets and patterns
of outliers.

\begin{proposition}
Let $h_1, \ldots, h_m$ be $m$ cells. The model with $h_1, \ldots,
h_m$ as a set of outliers contains the model with  $h_1, \ldots,
h_m$ as a pattern of outliers.
\end{proposition}

It follows that the definition of set of outliers in Definition
\ref{set-out} is stronger than the definition of pattern of
outliers. On the other hand, the notion of pattern of outliers may
help in finding parsimonious models.

\begin{remark}
In the case of sets and patterns of outliers, the procedure
presented in this paper is confirmatory, and a preliminary step is
needed in order to select the potential outliers. This step can be
done through the analysis of the residuals under the base model.
We follow this approach in the numerical examples below.
\end{remark}

\begin{example}
The definitions of set of outliers and pattern of outliers are
very flexible and can be combined in many ways. In order to show
this feature, we reconsider the following data analyzed in
\cite{voneye|mair:08} about the size of social network. The sample
is formed by $516$ individuals, classified by marital status
($M=1$ married, $M=2$ not married), gender ($G=1$ male; $G=2$
female), and size of social network ($S=1$ small, $S=2$ large).
The $8$ cell counts are listed in Table \ref{dati}, together with
the expected cell counts $\hat f$ and the Pearsonian residuals $(f
- \hat f)/\hat f)$.

\begin{table}[ht]
\begin{center}
\begin{tabular}{ccc|c|c|c}
$M$ & $G$ & $S$ & $f$ & $\hat f$ & $(f-\hat f)/\hat f$ \\ \hline
$1$ & $1$ & $1$ & $48$ & $38.9$ & $1.45$ \\
$1$ & $1$ & $2$ & $87$ & $38.1$ & $7.93$ \\
$1$ & $2$ & $1$ & $5$ & $38.9$ & $-5.44$ \\
$1$ & $2$ & $2$ & $14$ & $38.1$ & $-3.90$ \\
$2$ & $1$ & $1$ & $78$ & $91.6$ & $-1.42$ \\
$2$ & $1$ & $2$ & $45$ & $89.4$ & $-4.70$ \\
$2$ & $2$ & $1$ & $130$ & $91.6$ & $4.02$ \\
$2$ & $2$ & $2$ & $109$ & $89.4$ & $2.07$
\end{tabular}
\end{center}
\caption{Data on social network size.}  \label{dati}
\end{table}

As a base model, we use the complete independence model, which can
be written in log-linear form (with the usual log-linear notation)
as:
\begin{equation*}
\log p_{i,j,k} = \lambda + \lambda_i^{(M)} + \lambda_j^{(G)} +
\lambda_k^{(S)} \, .
\end{equation*}
The ideal of this base model is:
\begin{equation*}
\begin{split}
\mathrm{Ideal}(p_{1,2,1}p_{2,1,1} - p_{1,1,1}p_{2,2,1},
p_{1,2,1}p_{2,1,2} - p_{1,1,2}p_{2,2,1},  \\
-p_{1,2,2}p_{2,2,1} +
p_{1,2,1}p_{2,2,2}, -p_{2,1,2}p_{2,2,1} + p_{2,1,1}p_{2,2,2}, \\
-p_{1,1,2}p_{2,1,1} + p_{1,1,1}p_{2,1,2}, p_{1,2,2}p_{2,1,1} -
p_{1,1,2}p_{2,2,1},  \\
p_{1,2,2}p_{2,1,2} - p_{1,1,2}p_{2,2,2},
-p_{1,1,2}p_{2,2,1} + p_{1,1,1}p_{2,2,2},  \\
-p_{1,1,2}p_{1,2,1} +
p_{1,1,1}p_{1,2,2}) \, .
\end{split}
\end{equation*}
Thus, a Markov basis for this model is formed by $9$ moves. A
quick inspection of the residuals suggests that the cells
$(1,1,2)$ and $(2,2,1)$ are potential types, while the cells
$(1,2,1)$, $(1,2,2)$ and $(2,1,2)$ are potential antitypes.

If one would run a test for each of the previous cells as in
Definition \ref{one-out}, the approximated Monte Carlo $p$-values
are $0$ in all cases. Notice also that in this example the
definition of set of outliers as in Definition \ref{set-out} is
not helpful, as the corresponding model become saturated. However,
if we run the Monte Carlo test as in Definition \ref{pattern-out}
with these $5$ cells as a unique pattern of outliers, we obtain a
$p$-value $0.1411$, showing that the $5$ cells do not have a
common behaviour, but the test with two patterns of outliers,
namely the potential types and antitypes separately, exhibits a
$p$-value $0.0001$, with strong evidence that the cells in the two
patterns $\{(1,1,2),(2,2,1)\}$ and $\{(1,2,1),(1,2,2),(2,1,2) \}$
have a homogeneous behaviour in deviating from the base model. The
design matrix for this model is
\begin{equation*}
\tilde A = \begin{pmatrix}
1 & 1 & 1 & 1 & 0 & 0 \\
1 & 1 & 1 & 0 & 1 & 0 \\
1 & 1 & 0 & 1 & 0 & 1 \\
1 & 1 & 0 & 0 & 0 & 1 \\
1 & 0 & 1 & 1 & 0 & 0 \\
1 & 0 & 1 & 0 & 0 & 1 \\
1 & 0 & 0 & 1 & 1 & 0 \\
1 & 0 & 0 & 0 & 0 & 0 \\
\end{pmatrix} \, ,
\end{equation*}
where the first $4$ columns of $\tilde A$ correspond to the
parameters of the base model, while the last two columns
correspond to the two parameters additional parameters of the
model with two patterns of outliers. In this example, we are able
to describe the outlying cells with only two additional
parameters. The interpretation of this model could be that the
three types and two antitypes have common causes, but such an
interpretation would require a more detailed data analysis and is
beyond the scope of this paper. Here, we limit ourselves to
provide a mathematical description of the outliers.

We note that the model with two patterns of outliers has a less
clear geometric description with respect to the base model. In
fact, the corresponding ideal is:
\begin{equation*}
\begin{split}
\mathrm{Ideal}(-p_{1,2,2}^2p_{2,1,1}^2 +
p_{1,1,1}p_{1,2,1}p_{2,1,2}p_{2,2,2}, \\
-p_{1,1,2}p_{1,2,1}p_{2,1,1}^2 + p_{1,1,1}^2p_{2,1,2}p_{2,2,1},
p_{1,1,1}p_{1,2,2}^2p_{2,2,1} - p_{1,1,2}p_{1,2,1}^2p_{2,2,2}, \\
p_{1,2,2}^4p_{2,1,1}^2p_{2,2,1} -
p_{1,1,2}p_{1,2,1}^3p_{2,1,2}p_{2,2,2}^2) \, .
\end{split}
\end{equation*}
\end{example}

\begin{example} \label{big-ex}
In this example, we show the practical applicability of our
technique in the case of large tables. We analyze the data
presented in \cite{agresti:02} as an exercise on logit models for
multinomial responses. The contingency table, reported in Table
\ref{sec-tab}, refers to a sample of residents of Copenhagen. The
individuals of the sample were classified according to $4$
categorical variables: type of housing ($H$), degree of contact
with other residents ($C$), feeling of influence on apartment
management ($I$), and satisfaction with housing conditions ($S$).
The table has dimensions $4 \times 3 \times 2 \times 3$, for a
total of $72$ cells, and $S$ has the role of response variable.

\begin{table}[ht]
\begin{center}
\begin{tabular}{cc|ccc||ccc}
      &   Contact      &             &  Low   &       &        &  High  &    \\  \cline{2-8}
        &   Satisfaction &        Low &  Medium  & High &         Low &  Medium  & High \\  \hline
  Housing      &        Influence &          &          &      &             &         & \\ \hline
Tower blocks  &     Low  &    21  &     21  &     28   &       14 &    19  &   37  \\
              &    Medium &  {\bf 34}  &    22  &     36   &        17 &    23 &    40 \\
              &      High  &   10  &    11   &    36     &        3  &    5   &  23  \\ \hline
Apartments    &       Low &    61  &    23  &    17    &       78 &   46 &   43  \\                          &   Medium  &   43   &   35   &    40    &       48 &   45 &    86 \\
              &       High  &   26 &     18 &      54 &          15 &   25  &   62 \\ \hline    Atrium houses &       Low  &   13  &     9  &     10  &         20  &   23  &   20 \\
              &    Medium  &    8  &    8   &    12  &         10   &  22  &  24 \\
              &      High  &    6 &     7  &      9  &          7  &   10  &  21 \\ \hline
Terraced houses   &   Low  &   18  &     6  &      7     &    {\bf 57} &    23 &    13 \\
                &  Medium  &   15   &   13  &    13     &      31  &   21  &   13 \\
                 &   High   &   7     & 5   &    11   &         5    &  6 &    13 \\ \hline
\end{tabular}
\end{center}
\caption{Data on housing conditions in Copenhagen.}
\label{sec-tab}
\end{table}

As base model, we use a log-linear model including the $4$ main
effects and the interactions $[HS], [CS], [IS]$, that is, the
interactions between the response variable and the other three
variables. This model has $51$ degrees of freedom and fits the
data poorly. A Markov basis for this model is formed by $360$
moves and its computation with {\tt 4ti2} in carried out in few
seconds.

Analyzing the residuals of this table under the base model, we
note that there are $2$ Pearsonian residuals exceeding $3$ (in
absolute value). The two cells are:
\begin{itemize}
\item[-] $H=$``Tower blocks'', $C=$``Low'', $I=$``Medium'',
$S=$``Low''. The observed count is $34$ versus a predicted count
$16.62$, with a Pearsonian residual equal to $4.263$;

\item[-] $H=$``Terraced houses'', $C=$``High'', $I=$``Low'',
$S=$``Low''. The observed count is $57$ versus a predicted count
$35.58$, with a Pearsonian residual equal to $3.590$.
\end{itemize}
(the counts of these cells are printed in bold in Table
\ref{sec-tab}).

We consider these two cells as a set of outliers and we run the
Monte Carlo algorithm as in the previous example. The approximated
Monte Carlo $p$-value is $0$ (and the asymptotic $p$-value is $1.8
\cdot 10^{-9}$). This shows that the proposed set of outliers is
highly significant. Moreover, we note that the log-likelihood
ratio statistic decreases from the value of $123.19$ for the base
model to $88.51$ for the outlier model adding only $2$ parameters.
Looking at the table, this means that these two cells have a
special behaviour, and a particular inspection of the above
combinations could give relevant information on the data.
\end{example}

\section{Final remarks} \label{final-sect}

In this paper, we have shown how Algebraic Statistics is useful in
addressing the problem of outliers in contingency tables. In
particular, we have shown the efficacy of this approach in two
directions: (a) the use of non-asymptotic inference for
statistical models to recognize outliers; (b) a simple and
practical description of such statistical models from the point of
view of Geometry.

In particular, we have shown that Algebraic Statistics allows us
to a simple definition of set of outliers, patterns of outliers,
and their combinations.

Of course, the theory presented here does not exhaust all the
research themes on this topic. Many questions remain still open,
and among these problems we mention: the need for procedures and
algorithms for the recognition of outliers; the problems of the
choice of the $\alpha$-level for multiple tests, using
Bonferroni-type techniques. These problems are widely discussed in
many articles cited above, see e.g. \cite{kieser|victor:99}.

From the perspective of Algebraic Statistics, some interesting
issues are yet to be explored:
\begin{itemize}
\item The connections between the models studied here and the
mixture models. Mixture models for the special case of outliers on
the main diagonal are already considered in
\cite{bocci|carlini|rapallo:10};

\item The characterization of the Markov bases for the models with
outliers can yield useful information about the structure of the
corresponding statistical models. Although in the case of a single
pattern of outliers some Markov bases are already computed in
\cite{hara|takemura|yoshida:09}, yet the general case with several
outliers and patterns of outliers is currently unexplored.
\end{itemize}

\section*{Acknowledgments}

We acknowledge the help and support of Enrico Carlini (Politecnico
di Torino, Italy), who has provided several suggestions for a
precise and clear algebraic presentation. We also thank the
anonymous referees and the Associate Editor for their valuable
suggestions to improve the quality and the readability of the
paper.

\appendix

\section{Basic definitions and tools from Algebraic Statistics} \label{app-basic}

In this appendix we collect some basic facts about toric ideals
and statistical toric models. A more detailed presentation of
these results can be found in \cite{drton|sturmfels|sullivant:09}.
For some basic algebraic definitions we also refer to
\cite{pistone|riccomagno|wynn:01}.

Let ${\mathbb R}[p,\zeta]={\mathbb R}[p_1, \ldots , p_K, \zeta_0,
\zeta_1, \ldots, \zeta_d]$ be the polynomial ring in the variables
$p_1, \ldots , p_K, \zeta_1, \ldots, \zeta_d$ with real
coefficients.

\begin{definition}[Polynomial ideal] \label{def-ideal}
An ideal ${\mathcal I}$ in ${\mathbb R}[p,\zeta]$ is a set of
polynomials such that for all $g, h \in {\mathcal I}$, $g + h \in
{\mathcal I}$ and for all $g \in {\mathcal I}, h \in {\mathbb
R}[p,\zeta]$, $gh \in {\mathcal I}$.
\end{definition}

The Hilbert's basis theorem states that every polynomial ideal
${\mathcal I}$ as in Definition \ref{def-ideal} has a finite set of
generators $\{g_1, \ldots, g_\ell\}$, i.e., for all $g \in
{\mathcal I}$, there exist $r_1, \ldots , r_\ell \in {\mathbb
R}[p,\zeta]$ with $g = r_1g_1 + \ldots + r_\ell g_\ell$. In such a
case, we write
\begin{equation*}
{\mathcal I} = {\mathrm{Ideal}}(g_1, \ldots, g_\ell) \, .
\end{equation*}

Let $A$ be a non-negative integer matrix with $K$ rows and $d$
columns.

\begin{definition}[Toric model]
The toric model associated to $A$ is the set of probability
distributions on $\{1, \ldots, K\}$ satisfying
\begin{equation*}
p_k = \zeta_0 \prod_{r=1}^d \zeta_r^{A_{k,r}}
\end{equation*}
for all $k=1, \ldots, K$.
\end{definition}
In the definition above, the parameter $\zeta_0$ acts as a
normalizing constant. As noticed in Section \ref{recall-sect}, a
toric model is the extension of a log-linear model and the matrix
$A$ is the matrix representation of the minimal sufficient
statistics.

Now, define the ideal ${\mathcal J}_A$ as the ideal generated by
the set of binomials
\begin{equation*}
\left\{ p_k - \prod_{r=1}^d \zeta_r^{A_{k,r}} \ : \ k=1, \ldots, K
\right\} \, .
\end{equation*}
Eliminating the $\zeta$ parameters, i.e., intersecting the ideal
${\mathcal J}_A$ with the polynomial ring ${\mathbb R}[p]\subset
{\mathbb R}[p,\zeta]$, we define the toric ideal associated to
$A$.

\begin{definition}
The toric ideal ${\mathcal I}_A$ associated to $A$ is
\begin{equation} \label{toric-id}
{\mathcal I}_A = \mathrm{Elim}(\zeta ,J_A) ={\mathcal J}_A \cap
{\mathbb R}[p] \, .
\end{equation}
\end{definition}

It is known that the toric ideal in Eq. \eqref{toric-id} is
generated by a finite set of pure homogeneous binomials $\{b_1 ,
\ldots, b_\ell\}$. To actually compute a set of generators of
${\mathcal I}_A$ one can use Computer Algebra softwares such as
CoCoA together with the command {\tt Elim} \citep{cocoa}. For
toric ideals, specific algorithms are implemented in {\tt 4ti2}
\citep{4ti2}.

The toric ideal ${\mathcal I}_A$ has two major meanings in
Algebraic Statistics. From the combinatorial side, the binomials
$b_1, \ldots, b_\ell$ specify a Markov basis for the statistical
model, while from a geometric point of view they describe the
statistical model.

\begin{definition}
Let $f \in {\mathbb N}^K$ be a contingency table with $K$ cells,
and let $A$ be a $K \times d$ matrix. The reference set of $f$
under $A$ is:
\begin{equation*}
{\mathcal F}_A(f) = \left\{ f' \in {\mathbb N}^k \ : \ A^t f' = A^t f
\right\} \, .
\end{equation*}
\end{definition}

\begin{definition}[Markov basis]
A set of tables ${\mathcal M}_A = \{m_1, \ldots, m_\ell  \}$, $m_j
\in {\mathbb Z}^K$, is a Markov basis for the reference set
${\mathcal F}_A(f)$ if $A^t m_j=0$ for all $j$, and for any pair
of tables $f', f'' \in {\mathcal F}_A(f)$ there exist a sequence
of moves $(m_{j_1}, \ldots, m_{j_W})$ and a sequence of signs
$(\epsilon_i)_{i=1}^W$ with $\epsilon_i = \pm 1$ such that
\begin{equation*}
f'' = f' + \sum_{i=1}^W \epsilon_i m_{j_i} \ \ \ \ {\mbox and } \
\ \ \ f' + \sum_{i=1}^w \epsilon_i m_{j_i} \geq 0
\end{equation*}
for all $1 \leq w \leq W$. The elements of a Markov basis are
called moves.
\end{definition}

\begin{definition}[log-vector] \label{logvector}
Given a binomial in ${\mathbb R}[p]$
\begin{equation*}
b = \prod_{k=1}^K p_k^{m^+(k)} - \prod_{k=1}^K p_k^{m^-(k)} \, ,
\end{equation*}
its log-vector is
\begin{equation*}
m = m^+ - m^- \in {\mathbb Z}^K \, .
\end{equation*}
\end{definition}

\begin{theorem}[Diaconis-Sturmfels] \label{DS-teo}
A set of vectors $\{m_1, \ldots , m_\ell\}$ is a Markov basis for
the toric model associated to $A$ if and only if the corresponding
binomials $b_1, \ldots, b_\ell$ generate the toric ideal
${\mathcal I}_A$.
\end{theorem}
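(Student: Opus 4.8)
The plan is to reduce both implications to a single pivotal equivalence relating ideal membership to combinatorial connectivity inside the fibers. Throughout I use the pairing of Definition \ref{logvector}: a move $m \in \mathbb{Z}^K$ with $A^t m = 0$ corresponds to the binomial $b = p^{m^+} - p^{m^-}$, and the condition $A^t m = 0$ is exactly what makes $b$ lie in $\mathcal{I}_A$. The first step is the standard structural fact that $\mathcal{I}_A$ is spanned, as an $\mathbb{R}$-vector space, by the binomials $p^u - p^v$ with $u,v \in \mathbb{N}^K$ and $A^t u = A^t v$. This follows because $\mathcal{I}_A$ is the kernel of the algebra map $p_k \mapsto \prod_r \zeta_r^{A_{k,r}}$ obtained by eliminating $\zeta$ in Eq. \eqref{toric-id}: two monomials $p^u,p^v$ have the same image precisely when $A^t u = A^t v$, and differences of such monomials account for every element of the kernel.

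The pivotal lemma I would prove is: for $u,v \in \mathbb{N}^K$ with $A^t u = A^t v$, one has $p^u - p^v \in \mathrm{Ideal}(b_1,\ldots,b_\ell)$ if and only if $u$ and $v$ lie in the same connected component of $\mathcal{F}_A(u)$ under the moves $m_1,\ldots,m_\ell$, i.e. they are joined by a signed sequence of moves all of whose partial sums stay in $\mathbb{N}^K$. The combinatorial-to-algebraic direction is a routine telescoping: along a nonnegative path $u = w_0,\ldots,w_W = v$ write $p^u - p^v = \sum_t (p^{w_t} - p^{w_{t+1}})$, and since $w_{t+1} = w_t + \epsilon_t m_{j_t} \geq 0$ forces $w_t \geq m_{j_t}^{-}$, each summand equals $\pm p^{c_t} b_{j_t}$ with nonnegative exponent $c_t = w_t - m_{j_t}^{-}$, hence lies in the ideal. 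Combined with the spanning statement above, this already settles the forward implication of the theorem: if $\{m_1,\ldots,m_\ell\}$ is a Markov basis then every fiber is connected, so every spanning binomial of $\mathcal{I}_A$ lies in $\mathrm{Ideal}(b_1,\ldots,b_\ell)$, giving $\mathcal{I}_A \subseteq \mathrm{Ideal}(b_1,\ldots,b_\ell)$; the reverse inclusion is automatic because each $b_i \in \mathcal{I}_A$.

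For the converse I would establish the algebraic-to-combinatorial direction of the lemma, which is the substantive part. The cleanest route is to describe the quotient ring directly. Let $\sim$ be the congruence on monomials of $\mathbb{R}[p]$ generated by the elementary relations $p^{a+m_i^{+}} \sim p^{a+m_i^{-}}$ for all $a \in \mathbb{N}^K$ and all $i$. Because the $b_i$ are pure-difference binomials this is a monoid congruence on $\mathbb{N}^K$, giving the identification $\mathbb{R}[p]/\mathrm{Ideal}(b_1,\ldots,b_\ell) \cong \mathbb{R}[\mathbb{N}^K/\!\sim]$, so the monomial images form an $\mathbb{R}$-basis indexed by the $\sim$-classes. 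Two monomials are $\sim$-related only if they share the same value of $A^t(\cdot)$, and within a single fiber the relation $\sim$ coincides exactly with move-connectivity. On the other side, all monomials in a common fiber are already equal in $\mathbb{R}[p]/\mathcal{I}_A$, so its monomial images are indexed by fibers alone. The natural surjection $\mathbb{R}[p]/\mathrm{Ideal}(b_1,\ldots,b_\ell) \twoheadrightarrow \mathbb{R}[p]/\mathcal{I}_A$ is therefore an isomorphism if and only if each fiber forms a single $\sim$-class, i.e. if and only if the moves connect every fiber. Thus $b_1,\ldots,b_\ell$ generate $\mathcal{I}_A$ exactly when $\{m_1,\ldots,m_\ell\}$ is a Markov basis, closing both directions.

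I expect the main obstacle to be this hard direction, and specifically the justification that monomials from distinct connected components of one fiber stay linearly independent in $\mathbb{R}[p]/\mathrm{Ideal}(b_1,\ldots,b_\ell)$ — equivalently, the monoid-congruence description of a pure-difference binomial ideal. A viable alternative to the quotient-ring argument is a minimal-counterexample induction: assuming the binomials generate, take $u,v$ in a common fiber that are \emph{not} move-connected with $p^u$ minimal for a fixed term order, expand $p^u - p^v$ in the generators, and inspect the generator cancelling $p^u$ to extract a shorter connecting step. The care there lies entirely in keeping the intermediate exponent vectors nonnegative, which is precisely the constraint distinguishing a Markov basis from an arbitrary lattice basis of $\ker_{\mathbb{Z}}(A^t)$.
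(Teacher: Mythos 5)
The paper does not prove Theorem \ref{DS-teo} at all: it is quoted in the Appendix as a known result, with the reader referred to \cite{drton|sturmfels|sullivant:09} for details, so there is no in-paper argument to compare against. Judged on its own, your proof is correct and is essentially the standard argument from the original Diaconis--Sturmfels paper. The architecture is right: (i) $\mathcal{I}_A$ is the kernel of the monomial map and hence is spanned by the binomials $p^u-p^v$ with $A^tu=A^tv$; (ii) the telescoping computation along a nonnegative path correctly factors each step as $\pm p^{c_t}b_{j_t}$ (your remark that nonnegativity of $w_{t+1}$ forces $w_t\geq m_{j_t}^-$ is the right observation, with the roles of $m^+$ and $m^-$ swapped when $\epsilon_t=-1$); and (iii) the converse is correctly reduced to the fact that for a pure-difference binomial ideal the quotient ring is the monoid algebra of $\mathbb{N}^K$ modulo the congruence generated by $m_i^+\sim m_i^-$, so that the surjection onto $\mathbb{R}[p]/\mathcal{I}_A$ is an isomorphism exactly when each fiber is a single congruence class. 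You correctly flag that last identification as the substantive step; to make the write-up self-contained you would still need to verify that $\mathrm{Ideal}(b_1,\ldots,b_\ell)$ coincides with the $\mathbb{R}$-span of the differences $p^u-p^v$ over $\sim$-equivalent pairs (multiply out a general combination $\sum r_ib_i$ monomial by monomial and use transitivity), and that monomials in distinct fibers remain independent in $\mathbb{R}[p]/\mathcal{I}_A$ because that quotient is the semigroup algebra of the image monoid $\mathbb{N}A$. Neither point is a gap in the sense of a wrong idea --- both are routine --- but they are exactly where the nonnegativity constraint that separates a Markov basis from a mere lattice basis of $\ker_{\mathbb{Z}}(A^t)$ enters, as you note in your closing paragraph.
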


Now, we show how the toric ideal ${\mathcal I}_A$ identifies the
statistical toric model.

\begin{definition}
The set of points
\begin{equation*}
{\mathcal V}_A = \left\{ p=(p_1, \ldots, p_K) \ : \ g(p) = 0 \
\mbox{ for all } \ g \in {\mathcal I}_A \right\}
\end{equation*}
is the variety associated to $A$.
\end{definition}

To actually determine the variety ${\mathcal V}_A$, it is enough
to solve the polynomial system $b_1(p) = 0 , \ldots, b_\ell(p)=0$,
where $b_1, \ldots, b_\ell$ is a system of generators of
${\mathcal I}_A$.

The relations between the ideal ${\mathcal I}_A$ and the variety
${\mathcal V}_A$ imply that a unique computational algorithm
produces both the Markov basis and the equations defining the
variety. Moreover, the following fundamental result holds.

\begin{proposition} \label{dualincl}
Let ${\mathcal I}_{A_1}$ and ${\mathcal I}_{A_2}$ be two toric
ideals. Then:
\begin{equation*}
{\mathcal I}_{A_1} \subset {\mathcal I}_{A_2} \
\Longleftrightarrow {\mathcal V}_{A_2} \subset {\mathcal V}_{A_1}
\end{equation*}
\end{proposition}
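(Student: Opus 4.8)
The proposition is the ideal--variety duality specialized to toric ideals, and the plan is to prove the two inclusions separately, the forward one being purely formal and the reverse one resting on the parametric description of a toric variety. First I would dispose of $\mathcal{I}_{A_1} \subset \mathcal{I}_{A_2} \Rightarrow \mathcal{V}_{A_2} \subset \mathcal{V}_{A_1}$, which uses no toric structure at all: if $p \in \mathcal{V}_{A_2}$ then $g(p)=0$ for every $g \in \mathcal{I}_{A_2}$, and since $\mathcal{I}_{A_1} \subset \mathcal{I}_{A_2}$ this holds in particular for every $g \in \mathcal{I}_{A_1}$, so $p \in \mathcal{V}_{A_1}$. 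In words, $A \mapsto \mathcal{V}_A$ is inclusion-reversing, which is exactly this direction.

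For the reverse implication I would bring in, for an arbitrary set $S$, the vanishing ideal $I(S)$ of all polynomials that are zero on $S$. The map $I(\cdot)$ is itself inclusion-reversing, so the hypothesis $\mathcal{V}_{A_2} \subset \mathcal{V}_{A_1}$ gives at once $I(\mathcal{V}_{A_1}) \subset I(\mathcal{V}_{A_2})$. Hence the whole implication reduces to the single identity $I(\mathcal{V}_A) = \mathcal{I}_A$, valid for every design matrix $A$. One inclusion, $\mathcal{I}_A \subset I(\mathcal{V}_A)$, is immediate from the very definition of $\mathcal{V}_A$ as the zero locus of $\mathcal{I}_A$; conceptually the reverse inclusion amounts to $\mathcal{I}_A$ being radical, which is plausible because by Eq. \eqref{toric-id} the toric ideal is the elimination ideal $\mathcal{J}_A \cap \mathbb{R}[p]$, equivalently the kernel of the monomial homomorphism $\phi : p_k \mapsto \prod_{r=1}^d \zeta_r^{A_{k,r}}$ into the domain $\mathbb{R}[\zeta_1,\ldots,\zeta_d]$, and a kernel into an integral domain is prime.

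The step I expect to be the main obstacle is making $I(\mathcal{V}_A) \subset \mathcal{I}_A$ rigorous, since it is a Nullstellensatz-type assertion and the varieties live in $\mathbb{R}^K$, where the Nullstellensatz can fail even for prime ideals. I would exploit the parametric nature of toric models to bypass this difficulty. Given $f \in I(\mathcal{V}_A)$, note that $f$ vanishes in particular on the positive image $\phi((0,\infty)^d) \subset \mathcal{V}_A \cap (0,\infty)^K$, so the polynomial $f \circ \phi \in \mathbb{R}[\zeta_1,\ldots,\zeta_d]$ vanishes on the nonempty open set $(0,\infty)^d$; since a polynomial vanishing on a nonempty open subset of $\mathbb{R}^d$ must be identically zero, we get $f \circ \phi = 0$, i.e., $f \in \ker \phi = \mathcal{I}_A$. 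This establishes $I(\mathcal{V}_A) = \mathcal{I}_A$, and substituting it into $I(\mathcal{V}_{A_1}) \subset I(\mathcal{V}_{A_2})$ delivers $\mathcal{I}_{A_1} \subset \mathcal{I}_{A_2}$, completing the reverse implication.
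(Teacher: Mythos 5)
Your argument is correct, but note that the paper itself offers no proof of Proposition \ref{dualincl}: it is stated in the Appendix as a known ``fundamental result'' and deferred to the background references (e.g.\ \cite{drton|sturmfels|sullivant:09}), so there is no internal proof to compare against --- what you supply is a complete, self-contained derivation. The forward implication is, as you say, the formal inclusion-reversal of the zero-locus operator and needs no toric structure. The substantive content is your proof of $I({\mathcal V}_A)={\mathcal I}_A$ over ${\mathbb R}$: you rightly flag that a Nullstellensatz-type argument is unavailable over the reals even for prime ideals, and you bypass it by using the monomial parametrization --- any $f$ vanishing on ${\mathcal V}_A$ vanishes on $\phi\bigl((0,\infty)^d\bigr)\subset{\mathcal V}_A$, so $\phi^*f$ vanishes on a nonempty open subset of ${\mathbb R}^d$ and is therefore the zero polynomial, giving $f\in\ker\phi^*={\mathcal I}_A$. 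That is exactly the right device; a naive appeal to ${\mathcal I}_A$ being radical would not suffice in the real setting. The one step you use without justification is the identification ${\mathcal I}_A={\mathcal J}_A\cap{\mathbb R}[p]=\ker\phi^*$ (which also yields the containment $\phi({\mathbb R}_{>0}^d)\subset{\mathcal V}_A$ that you need); it is standard and follows in a line by substituting $p_k\mapsto\prod_r\zeta_r^{A_{k,r}}$ into a representation $f=\sum_k h_k\,\bigl(p_k-\prod_r\zeta_r^{A_{k,r}}\bigr)$, but since the entire reverse implication rests on it, it deserves to be recorded explicitly.
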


Finally, the statistical toric model is formed by the probability
distributions in ${\mathcal V}_A$, i.e., the statistical toric
model is simply ${\mathcal V}_A \cap \Delta$.

\bibliographystyle{decsci}
\bibliography{tuttopm}

\begin{thebibliography}{30}
\expandafter\ifx\csname natexlab\endcsname\relax\def\natexlab#1{#1}\fi
\expandafter\ifx\csname url\endcsname\relax
  \def\url#1{{\tt #1}}\fi
\expandafter\ifx\csname urlprefix\endcsname\relax\def\urlprefix{URL }\fi

\bibitem[{4ti2 team(2008)}]{4ti2}
4ti2 team (2008).
\newblock 4ti2---a software package for algebraic, geometric and combinatorial
  problems on linear spaces.
\newblock {A}vailable at www.4ti2.de.

\bibitem[{Agresti(2002)}]{agresti:02}
Agresti, A. (2002).
\newblock {\em Categorical Data Analysis\/}.
\newblock New York: Wiley, 2 ed.

\bibitem[{Barnett and Lewis(1994)}]{barnett|lewis:94}
Barnett, V. and Lewis, T. (1994).
\newblock {\em Outliers in Statistical Data\/}.
\newblock New York: John Wiley and Sons, 3 ed.

\bibitem[{Bishop {\em et~al.\/}(1975)Bishop, Fienberg, and
  Holland}]{bishop|fienberg|holland:75}
Bishop, Y.~M., Fienberg, S., and Holland, P.~W. (1975).
\newblock {\em Discrete multivariate analysis: {T}heory and practice\/}.
\newblock Cambridge: MIT Press.

\bibitem[{Bocci {\em et~al.\/}(2010)Bocci, Carlini, and
  Rapallo}]{bocci|carlini|rapallo:10}
Bocci, C., Carlini, E., and Rapallo, F. (2010).
\newblock Geometry of diagonal-effect models for contingency tables.
\newblock In M.~A. Viana and H.~P. Wynn (Eds.), {\em Algebraic Methods in
  Statistics and Probability II\/}, American Mathematical Society, vol. 516 of
  {\em Contemporary Mathematics\/}. 61--73.

\bibitem[{Carlini and Rapallo(2010)}]{carlini|rapallo:10}
Carlini, E. and Rapallo, F. (2010).
\newblock Algebraic modelling of category distinguishability.
\newblock In P.~Gibilisco, E.~Riccomagno, M.~P. Rogantin, and H.~P. Wynn
  (Eds.), {\em Mathematics Explorations in Contemporary Statistics\/},
  Cambridge University Press. 111--122.

\bibitem[{Carlini and Rapallo(2011)}]{carlini|rapallo:11}
Carlini, E. and Rapallo, F. (2011).
\newblock A class of statistical models to weaken independence in two-way
  contingency tables.
\newblock {\em Metrika\/}, {\em 73\/}, 1--22.

\bibitem[{{CoCoA}Team(2009)}]{cocoa}
{CoCoA}Team (2009).
\newblock {{\hbox{\rm C\kern-.13em o\kern-.07em C\kern-.13em o\kern-.15em A}}}:
  a system for doing {C}omputations in {C}ommutative {A}lgebra.
\newblock Available at \/ {\tt http://cocoa.dima.unige.it}.

\bibitem[{Consonni and Pistone(2007)}]{consonni|pistone:07}
Consonni, G. and Pistone, G. (2007).
\newblock Algebraic bayesian analysis of contingency tables with possibly
  zero-probability cells.
\newblock {\em Statist. Sinica\/}, {\em 17\/}, 1355--1370.

\bibitem[{Drton {\em et~al.\/}(2009)Drton, Sturmfels, and
  Sullivant}]{drton|sturmfels|sullivant:09}
Drton, M., Sturmfels, B., and Sullivant, S. (2009).
\newblock {\em Lectures on Algebraic Statistics\/}.
\newblock Basel: Birkhauser.

\bibitem[{Duffy(2010)}]{duffy:10}
Duffy, D. (2010).
\newblock {\em The gllm package\/}, {0.33} ed.
\newblock \urlprefix\url{http://cran.r-project.org}.

\bibitem[{Fuchs and Kenett(1980)}]{fuchs|kenett:80}
Fuchs, C. and Kenett, R. (1980).
\newblock A test for detecting outlying cells in the multinomial distribution
  and two-way contingency tables.
\newblock {\em J. Amer. Statist. Assoc.\/}, {\em 75\/}(370), 395--398.

\bibitem[{Haberman(1973)}]{haberman:73}
Haberman, S.~J. (1973).
\newblock The analysis of residuals in cross-classified tables.
\newblock {\em Biometrics\/}, {\em 29\/}(1), 205--220.

\bibitem[{Hara {\em et~al.\/}(2009)Hara, Takemura, and
  Yoshida}]{hara|takemura|yoshida:09}
Hara, H., Takemura, A., and Yoshida, R. (2009).
\newblock {M}arkov bases for two-way subtable sum problems.
\newblock {\em J. Pure Appl. Algebra\/}, {\em 21\/}(3), 1507--1521.

\bibitem[{Kateri and Balakrishnan(2008)}]{kateri|balakrishnan:08}
Kateri, M. and Balakrishnan, N. (2008).
\newblock Statistical evidence in contingency tables analysis.
\newblock {\em J. Statist. Plann. Inference\/}, {\em 138\/}, 873--887.

\bibitem[{Kieser and Victor(1999)}]{kieser|victor:99}
Kieser, M. and Victor, N. (1999).
\newblock {C}onfigural {F}requency {A}nalysis ({CFA}) revisited -- a new look
  at an old approach.
\newblock {\em Biom. J.\/}, {\em 41\/}(8), 967--983.

\bibitem[{Kotze and Hawkins(1984)}]{kotze|hawkins:84}
Kotze, T. and Hawkins, D.~M. (1984).
\newblock The identification of outliers in two-way contingency tables using $2
  \times 2$ subtables.
\newblock {\em Appl. Statist.\/}, {\em 33\/}(2), 215--223.

\bibitem[{Kuhnt(2004)}]{kuhnt:04}
Kuhnt, S. (2004).
\newblock Outlier identification procedures for contingency tables using
  maximum likelihood and ${L}_1$ estimates.
\newblock {\em Scand. J. Statist.\/}, {\em 31\/}, 431--442.

\bibitem[{Pachter and Sturmfels(2005)}]{pachter|sturmfels:05}
Pachter, L. and Sturmfels, B. (2005).
\newblock {\em Algebraic statistics for computational biology\/}.
\newblock New York: Cambridge University Press.

\bibitem[{Pistone {\em et~al.\/}(2001)Pistone, Riccomagno, and
  Wynn}]{pistone|riccomagno|wynn:01}
Pistone, G., Riccomagno, E., and Wynn, H.~P. (2001).
\newblock {\em Algebraic Statistics: Computational Commutative Algebra in
  Statistics\/}.
\newblock Boca Raton: Chapman\&Hall/CRC.

\bibitem[{{{R} Development Core Team}(2010)}]{rproject:10}
{{R} Development Core Team} (2010).
\newblock {\em {R}: A Language and Environment for Statistical Computing\/}.
\newblock {R} Foundation for Statistical Computing, {V}ienna, {A}ustria.
\newblock \urlprefix\url{http://www.R-project.org}.

\bibitem[{Rapallo(2003)}]{rapallo:03}
Rapallo, F. (2003).
\newblock Algebraic {M}arkov bases and {MCMC} for two-way contingency tables.
\newblock {\em Scand. J. Statist.\/}, {\em 30\/}(2), 385--397.

\bibitem[{Rapallo(2005)}]{rapallo:05}
Rapallo, F. (2005).
\newblock Algebraic exact inference for rater agreement models.
\newblock {\em Stat. Methods Appl.\/}, {\em 14\/}(1), 45--66.

\bibitem[{Rapallo(2006)}]{rapallo:06}
Rapallo, F. (2006).
\newblock {M}arkov bases and structural zeros.
\newblock {\em J. Symbolic Comput.\/}, {\em 41\/}(2), 164--172.

\bibitem[{Rapallo(2007)}]{rapallo:07}
Rapallo, F. (2007).
\newblock Toric statistical models: Parametric and binomial representations.
\newblock {\em Ann. Inst. Statist. Math.\/}, {\em 59\/}(4), 727--740.

\bibitem[{Simonoff(1988)}]{simonoff:88}
Simonoff, J.~S. (1988).
\newblock Detecting outlying cells in two-way contingency tables via backward
  stepping.
\newblock {\em Technometrics\/}, {\em 30\/}(3), 339--345.

\bibitem[{Tsumoto and Hirano(2007)}]{tsumoto|hirano:07}
Tsumoto, S. and Hirano, S. (2007).
\newblock Characteristic of {P}earson residuals in a contingency matrix.
\newblock In D.~Zhang, Y.~Wang, and W.~Kinsner (Eds.), {\em Proc. 6th IEEE Int.
  Conf. on Cognitive Informatics (ICCI'07)\/}, IEEE. 195--204.

\bibitem[{von Eye(2002)}]{voneye:02}
von Eye, A. (2002).
\newblock {\em Configural Frequency Analysis: Methods, Models, and
  Applications\/}.
\newblock Mahwah, NJ: Lawrence Erlbaum Associates.

\bibitem[{von Eye and Mair(2008)}]{voneye|mair:08}
von Eye, A. and Mair, P. (2008).
\newblock A functional approach to {C}onfigural {F}requency {A}nalysis.
\newblock {\em Austrian Journal of Statistics\/}, {\em 37\/}(2), 161--173.

\bibitem[{von Eye {\em et~al.\/}(2010)von Eye, Mair, and
  Mun}]{voneye|mair|mun:10}
von Eye, A., Mair, P., and Mun, E.-Y. (2010).
\newblock {\em Advances in Configural Frequency Analysis\/}.
\newblock New York: The Gulford Press.

\end{thebibliography}

\end{document}